\newcommand{\colorA}{Orchid}
\newcommand{\colorB}{Tan}
\newcommand{\Pic}{\operatorname{Pic}}
\newcommand{\vol}{\operatorname{vol}}
\theoremstyle{definition}
\newtheorem{theorem}{Theorem}[section]
\newtheorem{definition}[theorem]{Definition}
\newtheorem{lemma}[theorem]{Lemma}
\newtheorem{proposition}[theorem]{Proposition}
\newtheorem{corollary}[theorem]{Corollary}
\newtheorem{example}[theorem]{Example}
\newtheorem{remark}[theorem]{Remark}
\tikzset{
  mynode/.style={fill,circle,draw,inner sep=2pt,outer sep=0pt}
}
\newcommand{\R}{\mathbb{R}}
\newcommand{\ZZ}{\mathbb{Z}}
\newcommand{\ZZZ}{\mathbb{Z}_{\ge 0}}
\newcommand{\C}{\mathbb{C}}
\newcommand{\id}{\mathrm{id}}
\newcommand{\supp}{\mathrm{supp}}
\title{Postnikov--Stanley Polynomials are Lorentzian}
\date{\today}
\author{Serena An}
\address{Massachusetts Institute of Technology}
\email{anser@mit.edu}
\author{Katherine Tung}
\address{Harvard University}
\email{katherinetung@college.harvard.edu}
\author{Yuchong Zhang}
\address{University of Michigan}
\email{zongxun@umich.edu}
\begin{document}

\begin{abstract}
Postnikov--Stanley polynomials $D_u^w$ are a generalization of skew dual Schubert polynomials to the setting of arbitrary Weyl groups. We prove that Postnikov--Stanley polynomials are Lorentzian by showing that they are degree polynomials of Richardson varieties. Our result yields an interesting class of Lorentzian polynomials related to the geometry of Richardson varieties, generalizes the result that dual Schubert polynomials are Lorentzian (Huh--Matherne--M\'esz\'aros--St.\ Dizier 2022), and resolves the conjecture that Postnikov--Stanley polynomials have M-convex support (An--Tung--Zhang 2024). 
\end{abstract}

\maketitle

\section{Introduction}
Consider a Weyl group $W$ generated by the simple reflections $s_{\alpha_1},s_{\alpha_2},\dots,s_{\alpha_r}$ corresponding to the simple roots $\alpha_1,\dots,\alpha_r$. The Postnikov--Stanley polynomials $\{D_u^w\}_{u\le w\in W}$ \cite{PostnikovStanley} are defined as follows. For any $\lambda$ in the weight lattice $\Lambda$, let $x_i$ denote the inner product $(\lambda,\alpha_i^{\vee})$ induced by the Killing form, where $\alpha^{\vee}$ is the coroot $\frac{2\alpha}{(\alpha,\alpha)}$ corresponding to the root $\alpha\in \Phi$. Equivalently, we have $\lambda=x_1 \omega_1+\dots+x_r \omega_r$, where $\omega_1, \dots, \omega_r$ are the fundamental weights.
For a reflection $s_{\alpha}$ and a covering relation $u \lessdot us_{\alpha}$ in the strong Bruhat order of $W$, the \emph{Chevalley multiplicity} is defined by
\[m(u \lessdot us_{\alpha}) \coloneq (\lambda, \alpha^{\vee})=\sum_{i=1}^r c_i (\lambda, \alpha_i^{\vee})=\sum_{i=1}^r c_i x_i,\] 
where $c_i$ are nonnegative integers determined by $\alpha$ and the Lie type of $W$. Let the saturated chain $C = (u_0 \lessdot u_1\lessdot\cdots \lessdot u_{\ell})$ have weight 
\[m_C\coloneq m(u_0 \lessdot u_1)m(u_1 \lessdot u_2)\cdots m(u_{\ell - 1} \lessdot u_{\ell}).\]

\begin{definition}(\cite{PostnikovStanley})\label{def:PS}
    Given a Weyl group $W$ of a rank $r$ root system and elements $u,w \in W$, the \emph{Postnikov--Stanley polynomial} $D_u^w\in \R[x_1,\dots,x_r]$ is defined by \[D_u^w(x_1, \dots,x_{r}) \coloneqq \frac{1}{(\ell(w)-\ell(u))!} \sum_C m_C(x_1, \dots,x_{r}),\] where $\ell(w)$ and $\ell(u)$ denote the Coxeter length of $w$ and $u$ respectively, and the sum is over all saturated chains $C$ from $u$ to $w$ in the strong Bruhat order of $W$. As an abuse of notation, we also write $D_u^w(\lambda)\coloneqq D_u^w(x_1, \dots,x_{r})$, where $\lambda=x_1 \omega_1+\dots+x_r\omega_r$ for fundamental weights $\omega_1,\dots,\omega_r$.
\end{definition}

Richardson varieties were introduced by Kazhdan and Lusztig \cite{KLPonincare} in 1980 and named after Richardson in honor of his work \cite{Richardson} in 1992. Historically, Richardson varieties in Grassmannians can be traced back to Hodge \cite{Hodge} in 1942. Richardson varieties play an essential role in understanding intersection theory of flag varieties and have been linked with algebraic geometry, representation theory, and combinatorics. For instance, Deodhar \cite{Deodhar} explored their relationship with Kazhdan--Lusztig theory, Brion and Lakshmibai \cite{standardmonomial} advanced standard monomial theory, and Marsh and Rietsch \cite{nonnegativeflag} studied totally nonnegative parts of flag varieties. Further applications of Richardson varieties span several domains, including equivariant {$K$}-theory \cite{RichardsonK}, order polytopes \cite{bruhatintervalpolytopes,toricbruhat} and Catalan combinatorics \cite{richardsoncatalan}. In recent years, Richardson varieties have attracted increasing interest \cite{singgenRich,singRich,KawamataRich,torusquotient,toricrich,projRich,quantumRich,relRich}. Here, we work with \emph{closed} Richardson varieties $R_u^w$, which are the scheme-theoretical intersection $X^w\cap X_u$ of a Schubert variety $X^w$ and an opposite Schubert variety $X_u$. 

In this paper, we establish a connection between Postnikov--Stanley polynomials and the geometry of Richardson varieties. We show that the $\lambda$-degree of a Richardson variety $R_u^w$ can be expressed as $(\ell(w) - \ell(u))! \cdot D_u^w(\lambda)$. The $\lambda$-degree counts the number of points in the intersection of $e(R_u^w)$ and a generic linear subspace of complex codimension $\ell(w) - \ell(u)$ in $\mathbb{P}(V_{\lambda})$, where $V_{\lambda}$ is the irreducible representation of the Lie group $G$ corresponding to the dominant weight $\lambda\in \Lambda^+$, and $e: G/B \to \mathbb{P}(V_{\lambda})$ is the Borel--Weil mapping of the flag variety $G/B$. Furthermore, we show that the volume of the Newton--Okounkov body $\Delta_{w_0}(R_u^w, \lambda)$ is also given by the Postnikov--Stanley polynomial $D_u^w(\lambda)$.

This geometric perspective helps us contribute to the theory of Lorentzian polynomials \cite{Lorentzian}. The theory of {completely log-concave polynomials}, which when homogeneous are exactly the Lorentzian polynomials, was simultaneously and independently developed in a series of papers beginning with \cite{logconcavepoly}, but we use the term ``Lorentzian" in this paper for concision. Inspired by Hodge's index theorem for projective varieties, in 2019, Br\"and\'en and Huh defined Lorentzian polynomials, which are a class of polynomials with deep applications in combinatorics \cite{Lorentzian}. They developed a theory around these polynomials that enabled them and many others to prove a series of long-standing conjectures about the log-concavity and unimodality of coefficients of specializations of the Tutte polynomial \cite{Mas72, Her72, Rot71, Welsh, Bry82}. Typical examples of Lorentzian polynomials include homogeneous stable polynomials and volume polynomials of convex bodies and projective varieties \cite{Lorentzian}. Within algebraic combinatorics, notable examples are dual Schubert and normalized Schur polynomials \cite{HuhMeszaroLorSchur}, multivariate Tutte polynomials of morphisms of matroids for sufficiently small parameters \cite{EurHuh}, and normalizations of certain projections of integer point transforms of flow polytopes \cite{MesSet}. In 2024, Nadeau, Spink, and Tewari showed that toric Richardson varieties have volume polynomials equal to a family of degree polynomials associated to skew Schur polynomials indexed by ribbon shapes \cite{tewari}. These volume polynomials are Lorentzian, giving another interesting family of Lorentzian polynomials. Lorentzian polynomials also have M-convex support, which is equivalent to having a generalized permutahedron as its Newton polytope and having a saturated Newton polytope. This latter property is interesting in its own right, sparking much recent interest \cite{NewtonPolyinAC, FMD, CCMM, whole}. 

Building on work of Br\"and\'en and Huh \cite{Lorentzian} regarding volume polynomials, we prove the following result.

\begin{theorem}\label{theorem:PS Lorentzian}
    For any Weyl group $W$ and elements $u\le w\in W$, the Postnikov--Stanley polynomial $D_u^w$ is Lorentzian.
\end{theorem}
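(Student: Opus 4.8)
The plan is to realize $D_u^w$ (up to the scalar $(\ell(w)-\ell(u))!$) as the volume/degree polynomial of the Richardson variety $R_u^w$, and then invoke the theorem of Br\"and\'en and Huh that volume polynomials of projective varieties are Lorentzian. Concretely, the key identity I want to establish is
\[
\deg_\lambda\!\big(e(R_u^w)\big) \;=\; (\ell(w)-\ell(u))!\cdot D_u^w(\lambda)
\]
for every dominant weight $\lambda\in\Lambda^+$, where $e\colon G/B\to \PP(V_\lambda)$ is the Borel--Weil embedding and $\deg_\lambda$ counts points of $e(R_u^w)$ in a generic linear subspace of codimension $d\coloneq\ell(w)-\ell(u)=\dim R_u^w$.

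The main steps, in order, are as follows. \emph{Step 1:} Compute the class of $[R_u^w]$ in the Chow ring (or cohomology) of $G/B$. Since $R_u^w=X^w\cap X_u$ is a proper intersection of a Schubert variety and an opposite Schubert variety, $[R_u^w]=\sigma_w\cdot\sigma_u^{\mathrm{op}}$ in $H^*(G/B)$; this is the Richardson class, of pure dimension $d$. \emph{Step 2:} Express the $\lambda$-degree as a Chern-class integral: $\deg_\lambda(e(R_u^w))=\int_{G/B} c_1(\call_\lambda)^d\cdot[R_u^w]$, where $\call_\lambda$ is the line bundle on $G/B$ pulling back $\Oh(1)$ from $\PP(V_\lambda)$ (this requires that $e$ restricted to $R_u^w$ be birational onto its image, i.e.\ that $\call_\lambda$ is very ample on $R_u^w$ — true for $\lambda$ regular, and the general case follows by a limiting/degeneration argument or by working with $\lambda$ regular dominant and using polynomiality in $\lambda$). \emph{Step 3:} Evaluate the intersection number $\int_{G/B} c_1(\call_\lambda)^d\cdot[R_u^w]$ combinatorially. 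Here I would use the Chevalley formula: multiplying by $c_1(\call_\lambda)=\sum_i x_i\,c_1(\call_{\omega_i})$ and iterating the Chevalley--Monk rule turns $c_1(\call_\lambda)^d\cap[R_u^w]$ into a sum over saturated chains in the Bruhat interval $[u,w]$, with each chain $C=(u_0\lessdot\cdots\lessdot u_d)$ weighted exactly by the Chevalley multiplicity product $m_C(x_1,\dots,x_r)$. Pairing against the point class picks out precisely $\sum_C m_C$, which is $(\ell(w)-\ell(u))!\,D_u^w(\lambda)$ by \Cref{def:PS}. This is the heart of the argument and matches the abstract claim already asserted in the introduction.

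\emph{Step 4:} Conclude Lorentzianness. By Br\"and\'en--Huh \cite{Lorentzian}, for an irreducible projective variety $Y$ of dimension $d$ and nef line bundles $L_1,\dots,L_d$, the volume polynomial $(y_1,\dots,y_n)\mapsto \int_Y\big(\sum y_j L_j\big)^d$ (with the $L_j$ ranging over a nef basis, here the $\call_{\omega_i}$ which are globally generated hence nef) is Lorentzian, being a limit of volume polynomials of ample classes which are Lorentzian by the Khovanskii--Teissier inequalities / Hodge index theorem. Applying this to $Y=R_u^w$ (taking its reduced irreducible components — I should check $R_u^w$ is irreducible, which is classical for Richardson varieties over $\C$, or else argue the class $[R_u^w]$ is effective and sum over components, noting sums of Lorentzian polynomials of the same degree with M-convex support behave well; actually irreducibility is known, so this is clean) and the nef line bundles $\call_{\omega_1},\dots,\call_{\omega_r}$ gives that $\lambda\mapsto\int_{R_u^w}c_1(\call_\lambda)^d$ is Lorentzian in the variables $x_1,\dots,x_r$. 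Dividing by the positive constant $(\ell(w)-\ell(u))!$ preserves Lorentzianness, so $D_u^w$ is Lorentzian.

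\emph{Expected main obstacle.} The geometric bookkeeping in Steps 2--3 is where care is needed: (a) justifying that the $\lambda$-degree of the \emph{image} $e(R_u^w)$ equals the self-intersection $\int_{R_u^w}c_1(\call_\lambda)^d$ requires controlling the degree of the finite map $e|_{R_u^w}$ onto its image — for $\lambda$ strictly dominant this map is a closed embedding, but for $\lambda$ on the boundary of the Weyl chamber it may contract faces, so I would prove the identity first for regular $\lambda$ and then extend by the polynomiality of both sides in $\lambda$ (Postnikov--Stanley polynomials are manifestly polynomial, and Snapper's lemma gives polynomiality of the Euler characteristic / leading intersection term); and (b) verifying that iterated application of the Chevalley formula in $H^*(G/B)$, restricted to the Richardson subvariety, produces exactly the chain sum in \Cref{def:PS} with the correct multiplicities $c_i$ — this is essentially Postnikov--Stanley's original motivation, but it must be matched precisely against their normalization. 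Once the identity $\deg_\lambda(e(R_u^w))=(\ell(w)-\ell(u))!\,D_u^w(\lambda)$ is in hand, the Lorentzian conclusion is a direct appeal to \cite{Lorentzian}.
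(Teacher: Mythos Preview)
Your proposal is correct and follows essentially the same route as the paper: prove the identity $\deg_\lambda(R_u^w)=(\ell(w)-\ell(u))!\,D_u^w(\lambda)$ by combining the Richardson class formula $[R_u^w]=\sigma_{w_0w}\cdot\sigma_u$ with iterated Chevalley multiplication and Poincar\'e duality, then restrict the nef line bundles $\mathcal{L}_{\omega_i}$ to the irreducible variety $R_u^w$ and invoke Br\"and\'en--Huh's theorem on volume polynomials. The one place where the paper is cleaner than your sketch is your ``expected main obstacle'' (a): the paper avoids the Borel--Weil embedding entirely by \emph{defining} the $\lambda$-degree as the Poincar\'e pairing $\langle c_1(\mathcal{L}_\lambda)^{\ell(w)-\ell(u)},[R_u^w]\rangle$ on $G/B$, so no birationality or polynomiality-extension argument is needed and the identity holds directly for all $\lambda\in\Lambda^+$.
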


This theorem generalizes \cite[Proposition~18]{HuhMeszaroLorSchur}: by letting $W$ be of type A and $u = \id$, we obtain that dual Schubert polynomials are Lorentzian. The following is a straightforward corollary of \Cref{theorem:PS Lorentzian}. 

\begin{corollary}(cf.\ \cite[Conjecture~5.1]{whole})\label{cor:PS M-conv}
Postnikov--Stanley polynomials have \emph{M-convex} support; equivalently, their Newton polytopes are saturated and are generalized permutahedra.
\end{corollary}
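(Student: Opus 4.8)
The plan is to obtain this directly from \Cref{theorem:PS Lorentzian} together with two standard facts about Lorentzian polynomials \cite{Lorentzian}: first, every Lorentzian polynomial has M-convex support --- for polynomials of degree at least $2$ this is one of the two defining conditions, and in degrees $0$ and $1$ it holds automatically, the support being a single point or a subset of the standard basis vectors; and second, for a homogeneous polynomial $f$ the set $\supp(f)$ is M-convex if and only if $\Newton(f) = \conv(\supp(f))$ is a generalized permutahedron and $\supp(f) = \Newton(f)\cap\ZZ^{r}$, i.e.\ $f$ has a saturated Newton polytope. This second fact is the combinatorial dictionary between M-convex sets and integral generalized permutahedra recalled in the introduction.

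Concretely, I would first recall that $D_u^w$ is homogeneous of degree $\ell(w)-\ell(u)$: by \Cref{def:PS} each monomial of $D_u^w$ occurs in some weight $m_C$, which is a product of $\ell(w)-\ell(u)$ linear forms in $x_1,\dots,x_r$. Hence $\supp(D_u^w)$ lies in the coordinate-sum hyperplane $\{\alpha\in\ZZZ^{r}: \alpha_1+\dots+\alpha_r = \ell(w)-\ell(u)\}$, on which M-convexity in the strict sense is exactly the relevant notion (no passage to the weaker notion used for inhomogeneous polynomials is needed). By \Cref{theorem:PS Lorentzian}, $D_u^w$ is Lorentzian, so by the first fact $\supp(D_u^w)$ is M-convex; by the second fact this is equivalent to $\Newton(D_u^w)$ being a generalized permutahedron together with $\supp(D_u^w) = \Newton(D_u^w)\cap\ZZ^{r}$, i.e.\ to $D_u^w$ having a saturated Newton polytope. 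This is precisely the corollary, and in particular it resolves \cite[Conjecture~5.1]{whole}.

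There is no substantive obstacle here: the corollary is a formal consequence of \Cref{theorem:PS Lorentzian}, in which all the real work resides. The only points requiring attention are bookkeeping ones --- confirming that $D_u^w$ is genuinely homogeneous, so that ``M-convex support'' applies verbatim, and invoking the equivalence between having M-convex support and having a Newton polytope that is both a generalized permutahedron and saturated in the exact form asserted by the statement; both are routine.
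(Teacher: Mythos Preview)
Your proposal is correct and follows essentially the same approach as the paper: the corollary is deduced immediately from \Cref{theorem:PS Lorentzian} together with condition~(1) of \Cref{def:Lorentzian}, which records that Lorentzian polynomials have M-convex support. Your write-up adds helpful bookkeeping (homogeneity, low-degree cases, and the dictionary with generalized permutahedra), but the argument is the same one-line deduction as in the paper.
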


This paper is organized as follows. In \cref{section:preliminaries}, we give background and definitions. In \cref{section:degree}, we prove that Postnikov--Stanley polynomials are degree polynomials of Richardson varieties. In \cref{section:Lorentzian}, we prove that Postnikov--Stanley polynomials are Lorentzian.

\section*{Acknowledgments}
This work is a continuation of research conducted under the mentorship of Shiyun Wang and Meagan Kenney at the University of Minnesota Combinatorics and Algebra REU. We thank Shiliang Gao for his guidance. We also thank Grant Barkley, Christian Gaetz, Yibo Gao, Joe Harris, June Huh, Pavlo Pylyavskyy, Vasu Tewari, and Lauren Williams for helpful discussions.

\section{Preliminaries}\label{section:preliminaries}
\subsection{Flag varieties}
In this paper, we use standard terminology in Weyl groups and Schubert calculus. The readers are referred to \cite[Chapter~4]{linearalggrp} and \cite[Chapter~2]{Schubdegloci} for a more detailed exposition.

Let $G/B$ be a \emph{flag variety}, where $G$ is a complex semisimple simply-connected Lie group, and $B$ is a Borel subgroup containing a maximal torus $T$. Let $B_-$ be the opposite Borel subgroup of $G$ determined by $B \cap B_- = T$. The Weyl group $W$ is defined by $W\cong N(T)/T$, where $N(T)$ is the normalizer of $T$ in $G$. 

Let $\Phi$ be the root system associated with $W$, and let $\alpha_1,\dots,\alpha_r$ be the simple roots in $\Phi$. Define $V$ as the $\R$-linear space spanned by $\alpha_1,\dots,\alpha_r$, equipped with an inner product $(\cdot,\cdot)$ induced by the Killing form. For each root $\alpha\in \Phi$, the corresponding \emph{coroot} is defined by $\alpha^{\vee}\coloneqq \frac{2\alpha}{(\alpha,\alpha)}$.

The \emph{fundamental weights} $\omega_1,\dots,\omega_r\in V$ are defined by the condition $(\omega_i,\alpha_j^{\vee})=\delta_{i,j}$, where $\delta_{i,j}$ is the Kronecker delta. The \emph{weight lattice} $\Lambda$ is given by \[\Lambda\coloneqq\{x_1\omega_1+\dots+x_r\omega_r\mid x_1,\dots,x_r\in \mathbb{Z}\},\] and the subset of \emph{dominant weights} $\Lambda^+$ is \[\Lambda^+\coloneqq\{x_1\omega_1+\dots+x_r\omega_r\mid x_1,\dots,x_r\in \mathbb{Z}_{\ge0}\}.\]
For the weight lattice $\Lambda$, we have the following theorem.

\begin{theorem}(\cite[Proposition~2.3]{Tignol1995})\label{thm:picweight}
    There is an isomorphism of abelian groups between the Picard group $\Pic(G/B)$ and the weight lattice $\Lambda$ under vector addition.
\end{theorem}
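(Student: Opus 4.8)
The plan is to exhibit the isomorphism explicitly through associated line bundles and then verify bijectivity using the structure theory of $G$. Since $G$ is simply connected, the character lattice $X^\ast(T)$ of the maximal torus is exactly the weight lattice $\Lambda$ spanned by the fundamental weights, and since the unipotent radical of $B$ has no nontrivial characters we get $X^\ast(B)=X^\ast(T)=\Lambda$. To a character $\lambda\in X^\ast(B)$ one associates the line bundle $\call_\lambda\coloneqq G\times^{B}\C_{-\lambda}$ on $G/B$ built from the $B$-torsor $\pi\colon G\to G/B$; because the associated-bundle construction carries sums of characters to tensor products of line bundles, $\lambda\mapsto\call_\lambda$ is a homomorphism $\Lambda\to\Pic(G/B)$, and the claim is that it is an isomorphism of abelian groups, the addition of weights corresponding to tensor product of line bundles.

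To prove bijectivity I would invoke the exact sequence of abelian groups
\[
X^\ast(G)\longrightarrow X^\ast(B)\longrightarrow\Pic(G/B)\longrightarrow\Pic(G)
\]
attached to the $B$-torsor $\pi$ --- equivalently, the identification $\Pic(G/B)\cong\Pic^{B}(G)$ of line bundles on $G/B$ with $B$-equivariant line bundles on $G$, together with the ``twist the trivial bundle by a character'' and ``forget the equivariant structure'' maps. Here $X^\ast(G)=0$ because $G$ is semisimple, and $\Pic(G)=0$ because $G$ is semisimple \emph{and} simply connected; substituting these gives $X^\ast(B)\xrightarrow{\ \sim\ }\Pic(G/B)$, which together with $X^\ast(B)=\Lambda$ is precisely the statement. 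Unwound, injectivity says that a trivialization of $\call_\lambda$ is a nowhere-vanishing $B$-eigenfunction on $G$, hence a unit in $\Oh(G)^\ast=\C^\ast$, forcing $\lambda=0$; surjectivity says that any line bundle pulled back to $G$ becomes trivial, so that its equivariant structure collapses to a single character of $B$.

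The main obstacle is supplying the two structural inputs $\Pic(G)=0$ and $\Oh(G)^\ast=\C^\ast$ for $G$ semisimple simply connected; these are standard --- the coordinate ring of such a group is factorial and has only constant units, as one already sees for $\mathrm{SL}_n$ --- but not formal. A route that avoids them entirely uses the Bruhat paving $G/B=\bigsqcup_{w\in W}BwB/B$ by affine cells: a smooth projective variety admitting a paving by affine spaces has $\Pic$ free abelian of rank equal to the number of codimension-one cells, here the $r$ Schubert divisors $\overline{Bw_{0}s_{i}B/B}$; one then checks, via the Borel--Weil description of $H^{0}(G/B,\call_{\omega_i})$ as an irreducible $G$-module, that $\call_{\omega_i}$ has a $B$-eigensection vanishing to order one along exactly the $i$-th of these divisors, so that $\{\call_{\omega_1},\dots,\call_{\omega_r}\}$ is a $\ZZ$-basis of $\Pic(G/B)$ and $\lambda\mapsto\call_\lambda$ is an isomorphism. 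In this variant the only delicate point is the bookkeeping matching each fundamental weight with the correct Schubert divisor and fixing the sign convention in $\call_{\pm\lambda}$.
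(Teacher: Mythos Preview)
The paper does not prove this statement at all: it is quoted verbatim as a known result with a citation to \cite{Tignol1995}, and is used only as black-box input (to define $\call_\lambda$ and later to invoke nefness). There is therefore no ``paper's own proof'' to compare against.

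That said, your proposal is a correct and standard argument. The exact-sequence route $X^\ast(G)\to X^\ast(B)\to\Pic(G/B)\to\Pic(G)$ combined with the vanishing $X^\ast(G)=0$ and $\Pic(G)=0$ for semisimple simply connected $G$ is essentially how this is proved in the source you would be reconstructing; your identification of the two structural inputs as the only nontrivial steps is accurate. The alternative via the Bruhat paving is also fine and is arguably cleaner if one is willing to take the affine-cell decomposition of $G/B$ for granted, though the matching of $\call_{\omega_i}$ with the $i$-th Schubert divisor does require a small computation (e.g.\ the Chevalley formula in degree one, which the paper in fact states as \Cref{lemma:chevalley}). Either route would be acceptable; neither is needed for the paper, which is content to cite the result.
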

We use $\mathcal{L}_{\lambda}$ to denote the line bundle in $\Pic(G/B)$ corresponding to the weight $\lambda\in \Lambda$.

\subsection{Bruhat order}
In this paper, we use standard terminology for the (strong) Bruhat order. Readers are referred to \cite[Chapter~2]{BjornerBrenti} for a more detailed exposition.

Let $W$ be a Weyl group. Each element $w \in W$ can be expressed as a product of \emph{simple reflections} $s_{\alpha_i}$. 
If $w = s_{i_1}\cdots s_{i_{\ell}}$ is expressed as a product of simple reflections with $\ell$ minimal among all such expressions, then the string $s_{i_1}\cdots s_{i_{\ell}}$ is called a \emph{reduced decomposition} of $w$. We call $\ell \coloneqq \ell(w)$ the \emph{(Coxeter) length} of $w$.

\begin{definition}(\cite{BjornerBrenti})\label{def:bruhatorder}
    The \emph{(strong) Bruhat order} $\le$ on a Weyl group $W$ is defined as follows. Let $u, v\in W$ and $\ell = \ell(v)$. 
    We have $u\le v$ if and only if for any reduced decomposition $s_{i_1}\cdots s_{i_{\ell}}$ of $v$, there exists a reduced decomposition $s_{j_1}\cdots s_{j_k}$ of $u$ such that $s_{j_1}\cdots s_{j_k}$ is a (not necessarily consecutive) substring of $s_{i_1}\cdots s_{i_{\ell}}$. If additionally $\ell(v) = \ell(u) + 1$, we write $u\lessdot v$. 
\end{definition}

For $u\le w$ in the Bruhat order of $W$, the \emph{(Bruhat) interval} $[u, w]$ is the subposet containing all $v\in W$ such that $u\le v\le w$. 

\subsection{Postnikov--Stanley polynomials}
We defined Postnikov--Stanley polynomials $D_u^w$ for $u\le w \in W$ in \cref{def:PS}. The \emph{skew dual Schubert polynomials} are Postnikov--Stanley polynomials for which $W$ is of type A, and the \emph{dual Schubert polynomials} are Postnikov--Stanley polynomials for which $W$ is of type A and $u = \id$.

\begin{example}\label{ex:PS}
    Let $W = A_2$, and let $u = 213$ and $w = 321$. In the Bruhat order of $W$, there are two saturated chains from $u$ to $w$, namely $213\lessdot 231\lessdot 321$ and $213\lessdot 312\lessdot 321$, as shown in \cref{fig:123to321}. The first chain has weight $x_1x_2$ and the second chain has weight $(x_1 + x_2) \cdot x_2$. Thus, $D_{213}^{321} = \frac{1}{2!}(x_1 x_2 + (x_1 + x_2)\cdot x_2)$. Note that $D_{213}^{321}$ is a Postnikov--Stanley polynomial and a skew dual Schubert polynomial, but not a dual Schubert polynomial.
\end{example}

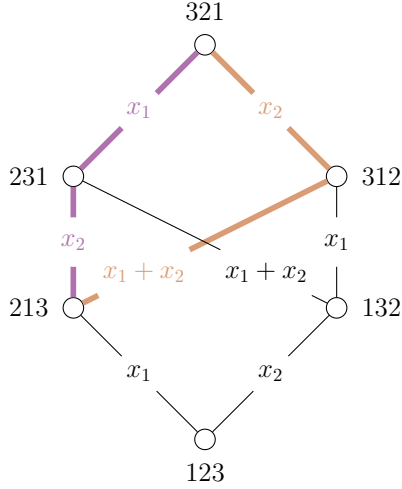
\begin{figure}
    \centering
    \begin{tikzpicture}[auto=center,every node/.style={circle, fill=white,scale=0.7,draw=black, solid}, scale=1.75, label distance=0.5mm]
        \node (a) at (0, 0) {};
        \node[label=left:{\large $213$}] (b) at (-1, 1) {};
        \node[label=right:{\large $132$}] (d) at (1, 1) {};
        \node[label=left:{\large $231$}] (e) at (-1, 2) {};
        \node[label=right:{\large $312$}] (g) at (1, 2) {};
        \node (h) at (0, 3) {};

        \node[draw=none, fill=none] at ($(a)-(0, 0.25)$) {\large $123$};
        \node[draw=none, fill=none] at ($(h)+(0, 0.25)$) {\large $321$};
        
        \draw (a)--node [draw=none] {\large $x_1$}(b);
        \draw (a)--node [draw=none] {\large $x_2$}(d);
        \draw[line width=0.75mm, \colorA] (b)--node [draw=none] {\large $x_2$}(e);
        \draw[line width=0.75mm, \colorA] (e)--node [draw=none] {\large $x_1$}(h);
        \draw[line width=0.75mm, \colorB] (g)--node [draw=none] {\large $x_2$}(h);
        \draw (d)--node [draw=none] {\large $x_1$}(g);
        \draw[line width=0.75mm, \colorB] (b)--node [near start, draw=none] {\large $x_1 + x_2$}(g);
        \draw (d)--node [near start, draw=none] {\large $x_1 + x_2$}(e);
    \end{tikzpicture}
    \caption{Calculating the Postnikov--Stanley polynomial $D_{213}^{321}$ in $A_2$.}
    \label{fig:123to321}
\end{figure}
\subsection{Schubert varieties and Richardson varieties}The \emph{Schubert variety} $X^w$ in $G/B$ is defined as the $B$-orbit closure $\overline{BwB/B}\subseteq G/B$, and the \emph{opposite Schubert variety} $X_w$ in $G/B$ is defined as the $B_-$-orbit closure $\overline{B_-wB/B}\subseteq G/B$. The \emph{Schubert class} $\sigma_w$ is defined in the cohomology ring $ H^*(G/B)$ as $\sigma_w\coloneqq[X^{w_0w}]$, where $w_0$ is the element of maximal Coxeter length in the Weyl group $W$.

A \emph{Richardson variety}, defined as the intersection of a Schubert variety and an opposite Schubert variety, is given as follows.

\begin{definition}(\cite{KLPonincare})
    For $u \le w$ in the Bruhat order of a Weyl group $W$, the \emph{(closed) Richardson variety} is the intersection $X^w \cap X_u$ within $G/B$.
\end{definition}

Since the intersection $X^w \cap X_u$ is transverse, we have the following proposition.

\begin{proposition}(\cite{SpeyerRich})\label{prop:rich class}
    Given a Richardson variety $R_u^w$, the Richardson class $[R_u^w]\in H^*(G/B)$ is given by the formula $[R_u^w]=[X^w]\cdot[X_u]=\sigma_{w_0w}\cdot\sigma_{u}$.
\end{proposition}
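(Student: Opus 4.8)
The plan is to reduce the assertion to the definition of Schubert classes using two standard inputs: the transversality of the Richardson intersection (the stated hypothesis, and the content of \cite{SpeyerRich}), and the behavior of Schubert classes under the $w_0$-translation. With the conventions of the excerpt, $X^w=\overline{BwB/B}$ has dimension $\ell(w)$, hence complex codimension $\ell(w_0)-\ell(w)=\ell(w_0w)$, while $X_u=\overline{B_-uB/B}$ has dimension $\ell(w_0)-\ell(u)$, hence codimension $\ell(u)$. Since $X^w\cap X_u$ is transverse — in particular it is a dimensionally proper intersection of the expected dimension $\ell(w)-\ell(u)$ and is generically transverse — the intersection represents the cup product of the fundamental classes $[X^w]$ and $[X_u]$ (a standard fact for generically transverse, dimensionally proper intersections on a smooth projective variety), giving
\[
[R_u^w]=[X^w]\cdot[X_u]\qquad\text{in }H^*(G/B).
\]
Both sides lie in $H^{2(\ell(w_0w)+\ell(u))}(G/B)$, so there is no room for correction terms coming from lower-dimensional components.

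It remains to identify the two factors. The equality $[X^w]=\sigma_{w_0w}$ is immediate from the definition $\sigma_v\coloneqq[X^{w_0v}]$ by taking $v=w_0w$. For $[X_u]=\sigma_u$ I would fix a representative $\dot w_0\in N(T)$ of $w_0$ and observe that $\dot w_0 B\dot w_0^{-1}=B_-$, so left multiplication by $\dot w_0$ is an automorphism of $G/B$ carrying $BvB/B$ onto $B_-(w_0v)B/B$, and therefore $X^v$ onto $X_{w_0v}$. Because $G$ is connected, this automorphism is homotopic to the identity and hence acts trivially on $H^*(G/B)$, so $[X_{w_0v}]=[X^v]$; taking $v=w_0u$ gives $[X_u]=[X^{w_0u}]=\sigma_{w_0(w_0u)}=\sigma_u$. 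Combining the three equalities yields $[R_u^w]=[X^w]\cdot[X_u]=\sigma_{w_0w}\cdot\sigma_u$, as claimed.

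The proof has essentially no obstacle beyond the transversality statement, which the proposition grants (and which is classical, going back to Richardson, and can be obtained from Kleiman's transversality theorem applied to the transitive $G$-action on the homogeneous space $G/B$). The one thing to be careful about is the direction of the indexing conventions — which of $X^w$, $X_w$ is the ``large'' variety and how $\sigma_w$ is defined in terms of it — so that the reindexings $w\mapsto w_0w$ and $u\mapsto w_0u$ are placed correctly; with the definitions exactly as in the excerpt, the computation above is the one that goes through.
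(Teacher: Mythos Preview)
Your argument is correct and is precisely the standard unpacking of the statement: transversality of the intersection gives $[R_u^w]=[X^w]\cdot[X_u]$, and the identifications $[X^w]=\sigma_{w_0w}$ and $[X_u]=\sigma_u$ follow from the definition of $\sigma_v$ together with the $w_0$-translation. The paper itself does not supply a proof of this proposition; it merely records the sentence ``Since the intersection $X^w\cap X_u$ is transverse'' and then cites \cite{SpeyerRich}, so your write-up is exactly the elaboration of the paper's one-line justification rather than a different route.
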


\subsection{Lorentzian polynomials} In this section, we define Lorentzian polynomials and state a crucial theorem for proving \cref{theorem:PS Lorentzian}.

\begin{definition}(\cite{DicreteConvexAnaly})
A subset $\mathcal{J} \subset \ZZ^n$ is \emph{M-convex} if for any index $i \in [n]$ and any $\alpha, \beta \in \mathcal{J}$ whose $i$th coordinates satisfy $\alpha_i > \beta_i$, there is an index $j \in [n]$ satisfying
\[
\alpha_j < \beta_j \quad \text{and} \quad \alpha - e_i + e_j \in \mathcal{J} \quad \text{and} \quad \beta - e_j + e_i \in \mathcal{J}.
\]
    A polynomial $f = \sum_{\alpha \in \mathbb{Z}_{\ge 0}^{n}} c_{\alpha} x^{\alpha} \in \R[x_{1}, \ldots, x_{n}]$ is \emph{M-convex} if its \emph{support} $\supp(f)\coloneqq \left\{\alpha\in \ZZZ^{n}\mid c_{\alpha} \neq 0\right\}$ is an M-convex set.
\end{definition}
\begin{definition}(\cite[Definition~5]{HuhMeszaroLorSchur}; see also \cite{Lorentzian})\label{def:Lorentzian}
    Let $h(x_1, \ldots, x_n)$ be a degree $d$ homogeneous polynomial, and let $e=d-2$. We say that $h$ is \emph{strictly Lorentzian} if all the coefficients of $h$ are positive and the quadratic form $\frac{\partial}{\partial x_{i_1}} \cdots \frac{\partial}{\partial x_{i_e}}h$  has the signature $(+, -, \ldots, -)$ for any $i_1, \ldots i_e \in [n]$. 
    
    We say that $h$ is \emph{Lorentzian} if it satisfies any one of the following equivalent conditions.
    \begin{enumerate}
        \item[(1)] All the coefficients of $h$ are nonnegative, the support of $h$ is M-convex, and the quadratic form $\frac{\partial}{\partial x_{i_1}} \cdots \frac{\partial}{\partial x_{i_e}}h$ has at most one positive eigenvalue for any $i_1, \ldots i_e \in [n]$.
        \item[(2)] All the coefficients of $h$ are nonnegative and, for any $i_1, i_2, \ldots \in [n]$ and any positive $k$, the functions $h$ and $\frac{\partial}{\partial x_{i_1}} \cdots \frac{\partial}{\partial x_{i_k}}h $ are either identically zero or log-concave on $\R^n_{>0}$.
        \item[(3)] The polynomial $h$ is a limit of strictly Lorentzian polynomials.
    \end{enumerate}
\end{definition}

Recall that given a projective variety $X$, a line bundle $\mathcal{L}$ on $X$ is \emph{numerically effective (nef)} if $\int_C c_1(\mathcal{L})\ge 0$ for all irreducible curves $C\subseteq X$, where $c_1(\mathcal{L})$ is the first Chern class of $\mathcal{L}$, and $\int_C: H^2(X,\mathbb{Z})\rightarrow \mathbb{Z}$ is the map that evaluates a cohomology class on the fundamental class of $C$ in $H_2(X,\mathbb{Z})$. A Cartier divisor $D$ on $X$ is \emph{nef} if the associated line bundle $\mathcal{O(D)}$ is nef. For further details, see \cite[Chapter~1]{PositivityAG1}.

\begin{theorem}(\cite[Theorem~4.6]{Lorentzian})\label{thm:volume Lorentzian}
    Let $\mathbf{H}=(H_1,\dots,H_r)$ be a collection of nef Cartier divisors on an $\ell$-dimensional irreducible projective variety $X$ over an algebraically closed field. Then the volume polynomial 
    \[\vol_{X,\mathbf{H}}(x_1,\dots,x_r)\coloneqq (x_1H_1+x_2H_2+\dots+x_rH_r)^{\ell}\] is Lorentzian.
\end{theorem}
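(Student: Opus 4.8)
\textbf{Proof proposal for \Cref{theorem:PS Lorentzian}.}

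The plan is to identify the Postnikov--Stanley polynomial $D_u^w$, up to the nonnegative scalar $\frac{1}{(\ell(w)-\ell(u))!}$, with the volume polynomial of the Richardson variety $R_u^w$ with respect to a well-chosen tuple of nef divisors, and then invoke \Cref{thm:volume Lorentzian}. Concretely, I would first record that $R_u^w = X^w \cap X_u$ is an irreducible projective variety over $\C$ (irreducibility of Richardson varieties being a standard fact, provable either via the Frobenius-splitting results of Brion--Lakshmibai or via Kleiman transversality applied to the intersection $X^w \cap X_u$) of dimension $\ell = \ell(w) - \ell(u)$, using \Cref{prop:rich class} to see that its class is $\sigma_{w_0 w}\cdot\sigma_u \neq 0$, hence nonempty of the expected dimension.

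Next I would set up the divisors. For each $i \in [r]$, let $H_i$ be the restriction to $R_u^w$ of (a divisor representing) the line bundle $\mathcal{L}_{\omega_i}$ on $G/B$ associated to the fundamental weight $\omega_i$ via \Cref{thm:picweight}. Each $\mathcal{L}_{\omega_i}$ is globally generated on $G/B$ (it is the pullback of $\mathcal{O}(1)$ under the Borel--Weil embedding $e\colon G/B \to \mathbb{P}(V_{\omega_i})$), so its restriction to the closed subvariety $R_u^w$ is globally generated, hence nef; thus $\mathbf{H} = (H_1,\dots,H_r)$ is a tuple of nef Cartier divisors on $R_u^w$. By \Cref{thm:volume Lorentzian}, the volume polynomial
\[
\vol_{R_u^w,\mathbf H}(x_1,\dots,x_r) = \bigl(x_1 H_1 + \cdots + x_r H_r\bigr)^{\ell}
\]
is Lorentzian. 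It therefore suffices to prove the identity
\[
\vol_{R_u^w,\mathbf H}(x_1,\dots,x_r) = (\ell(w)-\ell(u))!\cdot D_u^w(x_1,\dots,x_r),
\]
since Lorentzianness is preserved under scaling by the positive constant $\tfrac{1}{(\ell(w)-\ell(u))!}$ (e.g.\ via characterization (3) in \Cref{def:Lorentzian}).

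The core of the argument is this intersection-number identity, and I expect it to be the main obstacle. The strategy is to compute $(x_1 H_1 + \cdots + x_r H_r)^{\ell}$ by expanding multinomially and evaluating each monomial intersection number $\int_{R_u^w} H_1^{a_1}\cdots H_r^{a_r}$ inside $H^*(G/B)$, using $[R_u^w] = \sigma_{w_0 w}\cdot \sigma_u$ from \Cref{prop:rich class} and the fact that $c_1(\mathcal{L}_{\omega_i})$ is the Schubert divisor class $\sigma_{s_i}$ (dual to the $i$th node). The key combinatorial input is the Chevalley--Monk formula, which says that multiplying a Schubert class by $\sigma_{s_i}$ sums, with the Chevalley multiplicities $(\omega_i,\alpha^\vee)=c_i$ appearing in the definition of $m(\cdot\lessdot\cdot)$, over the covers in Bruhat order; iterating this exactly $\ell$ times produces a sum over saturated chains from $u$ to $w$ weighted by the products $m_C$ of Chevalley multiplicities, which is precisely $(\ell(w)-\ell(u))!\,D_u^w$ after the multinomial coefficients are absorbed. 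Alternatively — and this is likely the cleaner route, matching the paper's own description in the introduction — one can avoid re-deriving this by citing that the $\lambda$-degree of $R_u^w$, namely $\int_{R_u^w} c_1(\mathcal L_\lambda)^{\ell}$, equals $(\ell(w)-\ell(u))!\,D_u^w(\lambda)$ for every dominant $\lambda = \sum x_i\omega_i$ (this is established in \Cref{section:degree}); since this polynomial identity in the $x_i$ holds for all dominant integral weights, which are Zariski-dense in $\R^r$, it holds identically, and the left side is exactly $\vol_{R_u^w,\mathbf H}$. The one subtlety to check carefully is that \Cref{thm:volume Lorentzian} requires the $H_i$ to be honest Cartier divisors rather than mere line bundle classes, which is fine since $G/B$ is smooth and $R_u^w$ is a projective variety, so $\mathrm{Pic}$ agrees with the Cartier divisor class group and each globally generated line bundle has a representing effective Cartier divisor; and that irreducibility of $R_u^w$ is genuinely needed and genuinely holds.
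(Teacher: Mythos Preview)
Your proposal (which is explicitly for \Cref{theorem:PS Lorentzian}, not the cited result \Cref{thm:volume Lorentzian} displayed above, which the paper does not prove) is correct and follows the paper's own argument essentially step for step: restrict the fundamental-weight line bundles $\mathcal{L}_{\omega_i}$ to the irreducible projective Richardson variety $R_u^w$, observe they remain nef, identify the resulting volume polynomial with $(\ell(w)-\ell(u))!\cdot D_u^w$ via \Cref{prop:degree}, and invoke \Cref{thm:volume Lorentzian}. The only differences are cosmetic---the paper obtains nefness from \Cref{lemma:nef line bundle} rather than from global generation via Borel--Weil, and cites \cite[Proposition~1.31]{3264} and \cite[Example~1.4.4]{PositivityAG1} for the restriction step---while you make explicit two points the paper leaves implicit, namely the irreducibility of $R_u^w$ and the Zariski-density argument extending the degree identity from dominant integral weights to all $(x_1,\dots,x_r)$.
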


\section{Degree polynomials of Richardson varieties}\label{section:degree}
In this section, we prove that Postnikov--Stanley polynomials are degree polynomials of Richardson varieties. Before defining degree polynomials, we recall the definition of a \emph{Poincaré pairing}.

\begin{definition}
Let $X$ be a smooth projective variety over $\C$ of dimension $n$, and let $H^i(X, \mathbb{Z})$ denote the $i$th cohomology group with integer coefficients. The \emph{Poincaré pairing} is a non-degenerate bilinear form
\[
\langle \cdot, \cdot \rangle : H^i(X, \mathbb{Z}) \times H^{2n-i}(X, \mathbb{Z}) \to \mathbb{Z}
\]
defined by the composition of the cup product $\cdot: H^i(X, \mathbb{Z}) \times H^{2n-i}(X, \mathbb{Z})\to H^{2n}(X, \ZZ)$ and evaluation $\int_X : H^{2n}(X, \mathbb{Z}) \to \mathbb{Z}$ of a cohomology class on the fundamental class of $X$ in $H_{2n}(X, \ZZ)$; that is,
\[
\langle \theta_1,\theta_2 \rangle = \int_X \theta_1 \cdot \theta_2.
\]
\end{definition}

We restrict our attention to the case of $X=G/B$ a flag variety and work with the cohomology ring $H^*(G/B)$ expressed in the basis of Schubert classes $\{\sigma_w:w\in W\}$. In this context, the Poincaré pairing $\langle \theta_1,\theta_2\rangle$ is the coefficient of the top-degree Schubert class $\sigma_{w_0}$ in the expansion of $\theta_1\cdot \theta_2$ in terms of Schubert classes, where $\sigma_{w_0}$ corresponds to the longest element $w_0$ in the Weyl group $W$.

Let $\lambda$ denote the weight $x_1\omega_1+\dots+x_r\omega_r\in \Lambda$, and let $\mathcal{L}_{\lambda}$ be the line bundle associated with $\lambda$ from \Cref{thm:picweight}. Let $\overline{\lambda}$ denote the first Chern class $c_1(\mathcal{L}_\lambda)$ of $\mathcal{L}_{\lambda}$.

\begin{definition}\label{def:degreepoly}
    The \emph{$\lambda$-degree} $\deg_{\lambda}(X)$ of an $\ell$-dimensional irreducible subvariety $X$ of $G/B$ is defined as the Poincar\'e pairing $\langle \overline{\lambda}^{\ell}, [X]\rangle$.
\end{definition}

Let $D$ denote the Cartier divisor associated with the line bundle $\mathcal{L}_\lambda$. The $\lambda$-degree $\langle \overline{\lambda}^{\ell}, [X]\rangle$ represents the intersection product $(D^{\ell}\cdot X)$, which corresponds to the self-intersection number $\int_{X} (D)^{\ell}\in \mathbb{Z}$. For further details, see \cite[Chapter~1]{PositivityAG1}.

The main result of this section is that a Postnikov-Stanley polynomial evaluated at $\lambda$ gives the $\lambda$-degree of the corresponding Richardson variety up to multiplication by a scalar.
\begin{proposition}\label{prop:degree}
    Given $u,w\in W$ and $\lambda\in \Lambda^+$, the $\lambda$-degree $\deg_\lambda(R_u^w)$ of the Richardson variety $R_u^w$ is given by 
    \[\deg_\lambda(R_u^w)=(\ell(w)-\ell(u))!\cdot D_u^w(\lambda).\]
\end{proposition}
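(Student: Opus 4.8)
The plan is to compute the $\lambda$-degree $\deg_\lambda(R_u^w) = \langle \overline{\lambda}^{\ell}, [R_u^w]\rangle$ via the Chevalley formula for multiplication by the divisor class $\overline{\lambda}$ in $H^*(G/B)$, iterate it $\ell = \ell(w) - \ell(u)$ times, and match the resulting chain sum to the definition of $D_u^w$. First I would recall the Chevalley--Monk formula: for a weight $\lambda$ (equivalently the divisor class $\overline{\lambda}$) and a Schubert class $\sigma_v$, one has $\overline{\lambda} \cdot \sigma_v = \sum_{\alpha} (\lambda, \alpha^\vee) \, \sigma_{v s_\alpha}$, where the sum runs over positive roots $\alpha$ with $v \lessdot v s_\alpha$ (a covering relation in Bruhat order). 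The coefficient $(\lambda, \alpha^\vee)$ is exactly the Chevalley multiplicity $m(v \lessdot v s_\alpha) = \sum_i c_i x_i$ appearing in \Cref{def:PS}.

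Next I would use \Cref{prop:rich class}, which identifies $[R_u^w] = \sigma_{w_0 w} \cdot \sigma_u$, and the fact that the $\lambda$-degree is the coefficient of $\sigma_{w_0}$ in $\overline{\lambda}^{\ell} \cdot \sigma_{w_0 w} \cdot \sigma_u$. The cleanest bookkeeping is to expand $\overline{\lambda}^{\ell} \cdot \sigma_u$ by applying Chevalley--Monk repeatedly: each application sends $\sigma_v \mapsto \sum m(v \lessdot v') \sigma_{v'}$, so after $\ell$ steps $\overline{\lambda}^{\ell} \cdot \sigma_u = \sum_{C} m_C \, \sigma_{v}$, where $v$ is the top of a length-$\ell$ saturated chain $C$ starting at $u$ and $m_C$ is its weight (note $m_C$ is a product of $\ell$ linear forms, hence a homogeneous polynomial of degree $\ell$ in $x_1,\dots,x_r$). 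Then one needs the pairing $\langle \sigma_v \cdot \sigma_{w_0 w}, \sigma_{w_0}\rangle$: since Schubert classes are a self-dual basis under the Poincaré pairing with $\sigma_v$ dual to $\sigma_{w_0 v}$, this coefficient is $1$ if $v = w$ (so that $\sigma_v$ is dual to $\sigma_{w_0 w}$), and $0$ otherwise. Therefore only chains $C$ from $u$ to $w$ survive, giving $\deg_\lambda(R_u^w) = \sum_{C: u \to w} m_C(x_1,\dots,x_r)$, which by \Cref{def:PS} equals $(\ell(w) - \ell(u))! \cdot D_u^w(\lambda)$.

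I also need to address the degree-matching subtlety: $\overline{\lambda}$ lives in $H^2(G/B)$, so $\overline{\lambda}^{\ell}$ lives in $H^{2\ell}(G/B)$; pairing it against $[R_u^w] \in H^{2(\dim G/B - \ell)}(G/B)$ (recall $\dim R_u^w = \ell(w) - \ell(u) = \ell$, hence $[R_u^w]$ has the appropriate codegree by \Cref{prop:rich class}, as $\mathrm{codim}\, R_u^w = \dim G/B - \ell$) lands in $H^{2\dim G/B}$, so the Poincaré pairing is well-defined and all intermediate expansions are finite. I would also note that one must check that the Chevalley--Monk expansion does not produce chains that "exit" the interval in a way that matters — but since we only extract the $\sigma_{w_0}$-coefficient and that forces the endpoint to be exactly $w$, and since every saturated chain from $u$ to a fixed $w$ of length $\ell(w) - \ell(u)$ stays inside $[u,w]$ automatically, no such issue arises.

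\textbf{Main obstacle.} The conceptual content is entirely standard Schubert calculus, so the main thing to get right is the combinatorial bookkeeping of iterating Chevalley--Monk: carefully tracking that the accumulated coefficient of $\sigma_v$ after $\ell$ steps is precisely $\sum_C m_C$ over saturated chains from $u$ to $v$ (rather than, say, over all walks, which coincide here because Bruhat covers strictly increase length), and correctly invoking Poincaré duality of the Schubert basis (i.e. $\langle \sigma_a, \sigma_b \rangle = \delta_{a, w_0 b}$) to kill all endpoints except $w$. The factor of $(\ell(w) - \ell(u))!$ is exactly the normalization built into \Cref{def:PS}, so it appears automatically once the chain sum is identified; I expect no real difficulty beyond stating these ingredients precisely and citing the Chevalley--Monk formula.
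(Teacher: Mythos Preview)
Your proposal is correct and follows essentially the same route as the paper's own proof: use \Cref{prop:rich class} to write $[R_u^w]=\sigma_{w_0w}\cdot\sigma_u$, iterate the Chevalley formula to expand $\overline{\lambda}^{\ell}\cdot\sigma_u$ as a chain sum (the paper packages this step as \Cref{lemma:chainsum}), and then apply Poincar\'e duality of the Schubert basis (\Cref{lemma:poincaredual}) to isolate the $w'=w$ term. Your remarks on degree matching and on chains automatically staying in $[u,w]$ are correct side observations and do not alter the argument.
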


Our \cref{prop:degree} is a direct generalization of \cite[Proposition 4.2]{PostnikovStanley} from the setting of Schubert varieties to that of Richardson varieties, and is proved along similar lines. Before presenting the proof, we give an example, the {Chevalley formula}, and a few technical lemmas.

\begin{example}\label{ex:deg}
    Continuing \cref{ex:PS}, let $W = A_2$, $u = 213$, and $ w = 321$. Consider the simple roots $\alpha_1 = (1,-1,0)$ and $\alpha_2 = (0,1,-1)$, and let $V = \mathrm{span}_{\R}(\alpha_1,\alpha_2)$. We obtain fundamental weights $\omega_1 = (\frac{2}{3}, -\frac{1}{3}, -\frac{1}{3})$ and $\omega_2 = (\frac{1}{3}, \frac{1}{3}, -\frac{2}{3})$ in $V$. Let $\lambda$ be the dominant weight $\omega_1 + \omega_2 = (1,0,-1)$, so $x_1 = x_2 = 1$. Then \[\deg_{\lambda} R_u^w = (\ell(w) - \ell(u))! \cdot D_u^w(1,1) = 2! \cdot \frac{3}{2} = 3.\]
\end{example}

\begin{lemma}(\cite{Chevalley})\label{lemma:chevalley}
Given a Weyl group $W$ and a Schubert class $\sigma_w$, we have
    \[\overline{\lambda}\cdot \sigma_w=\sum_{\alpha} (\lambda,\alpha^{\vee})\sigma_{ws_{\alpha}}=\sum_{\alpha} m(w\lessdot ws_{\alpha})\sigma_{ws_{\alpha}},\]
    where the sum is over all roots $\alpha\in\Phi$ such that $\ell(ws_{\alpha})=\ell(w)+1$.
\end{lemma}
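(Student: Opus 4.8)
Since this is a classical formula of Chevalley, the plan is to indicate the standard argument, which proceeds by reducing to the case of a Schubert divisor and then identifying the structure constants. Both sides of the asserted identity are additive in $\lambda$: by \Cref{thm:picweight} the map $\lambda \mapsto \mathcal{L}_\lambda$ is a homomorphism, so $\overline{\lambda + \mu} = \overline{\lambda} + \overline{\mu}$, and clearly $(\lambda + \mu, \alpha^\vee) = (\lambda, \alpha^\vee) + (\mu, \alpha^\vee)$. Writing $\lambda = \sum_i x_i \omega_i$, it therefore suffices to treat $\lambda = \omega_i$ a fundamental weight. Under the isomorphism $\Pic(G/B) \cong \Lambda$ of \Cref{thm:picweight}, the class $\overline{\omega_i}$ is exactly the class $\sigma_{s_i} = [X^{w_0 s_i}]$ of the $i$th Schubert divisor; I would recall this standard identification by matching the two usual systems of generators of $\Pic(G/B)$, namely the $\mathcal{L}_{\omega_i}$ and the Schubert divisor classes.

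It then remains to prove $\sigma_{s_i} \cdot \sigma_w = \sum_{\alpha} (\omega_i, \alpha^\vee)\, \sigma_{ws_\alpha}$, the sum over roots $\alpha$ with $\ell(ws_\alpha) = \ell(w) + 1$. For degree reasons $\sigma_{s_i}\cdot\sigma_w$ lies in $H^{2(\ell(w)+1)}(G/B)$, so it expands as $\sum_{w \lessdot ws_\alpha} c^i_\alpha\, \sigma_{ws_\alpha}$ for some coefficients $c^i_\alpha$, and Kleiman transversality (intersecting generic translates of $X^{w_0 s_i}$ and $X^{w_0 w}$) shows $c^i_\alpha \in \ZZ_{\ge 0}$. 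To identify $c^i_\alpha = (\omega_i, \alpha^\vee)$ I would pass to $T$-equivariant cohomology and use localization along the moment graph: the constant $c^i_\alpha$ is detected by the restriction of $c_1^T(\mathcal{L}_{\omega_i})$ to the $T$-stable curve $C_\alpha \cong \mathbb{P}^1$ joining the $T$-fixed points $wB$ and $ws_\alpha B$, and comparing the $T$-weights of $\mathcal{L}_{\omega_i}$ at these two fixed points shows this restriction has degree $(\omega_i, \alpha^\vee)$. (Equivalently, one may run an induction on $\ell(w)$ using the Leibniz rule $\partial_j(fg) = \partial_j(f)\,g + (s_j f)\,\partial_j(g)$ for the BGG/Demazure operators, or simply quote \cite{Chevalley}.) Summing back over $i$ with weights $x_i$ and invoking additivity recovers $\overline{\lambda}\cdot\sigma_w = \sum_\alpha (\lambda, \alpha^\vee)\, \sigma_{ws_\alpha}$, and $(\lambda, \alpha^\vee) = m(w \lessdot ws_\alpha)$ holds by the definition of the Chevalley multiplicity.

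The genuinely substantive step is the last one: the identification of the structure constant $c^i_\alpha$ with $(\omega_i, \alpha^\vee)$. The reductions to a fundamental weight and to a Schubert divisor, and the determination that only the covers $w \lessdot ws_\alpha$ contribute, are purely formal, but extracting the exact coefficient requires either the equivariant localization machinery along the curve $C_\alpha$ or an explicit $\mathrm{SL}_2$-computation inside the minimal parabolic attached to $s_i$. This is where all of Chevalley's input lies, and it is the part I would expect to be the main obstacle to a self-contained account.
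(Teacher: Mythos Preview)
The paper does not prove this lemma at all: it is stated with a citation to \cite{Chevalley} and used as a black box. Your proposal is a correct sketch of the classical argument (reduce to fundamental weights by additivity, identify $\overline{\omega_i}$ with the Schubert divisor class $\sigma_{s_i}$, then extract the coefficient $(\omega_i,\alpha^\vee)$ via localization along the $T$-stable curve or via BGG operators), but it goes well beyond what the paper does, which is simply to quote the result.
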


As a corollary, we have the following result.

\begin{lemma}\label{lemma:chainsum}
    Given a Weyl group $W$ and a Schubert class $\sigma_u$, we have  for any positive integer $\ell$ that
    \[\overline{\lambda}^\ell \cdot \sigma_u=\ell!\cdot\sum_{w' \in W: \ell(w')=\ell(u)+\ell}D_u^{w'}(\lambda) \sigma_{w'}. \]
\end{lemma}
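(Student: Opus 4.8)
The plan is to induct on $\ell$, using \Cref{lemma:chevalley} (the Chevalley formula) as the base case and inductive step, together with the definition of Postnikov--Stanley polynomials in terms of saturated chains.

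For the base case $\ell=1$, \Cref{lemma:chevalley} gives $\overline{\lambda}\cdot\sigma_u=\sum_{\alpha}m(u\lessdot us_{\alpha})\sigma_{us_{\alpha}}$, where the sum is over roots $\alpha$ with $\ell(us_{\alpha})=\ell(u)+1$. On the other side, for $w'$ with $\ell(w')=\ell(u)+1$ we have $D_u^{w'}(\lambda)=\frac{1}{1!}\sum_C m_C$, where the sum is over saturated chains from $u$ to $w'$; such a chain is just a single covering relation $u\lessdot w'$, so $D_u^{w'}(\lambda)=m(u\lessdot w')$ if $u\lessdot w'$ and $0$ otherwise. Thus both sides agree, and the factor $1!=1$ matches.

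For the inductive step, I would assume the identity holds for $\ell-1$ and write $\overline{\lambda}^{\ell}\cdot\sigma_u=\overline{\lambda}\cdot\bigl(\overline{\lambda}^{\ell-1}\cdot\sigma_u\bigr)=(\ell-1)!\cdot\overline{\lambda}\cdot\sum_{v:\,\ell(v)=\ell(u)+\ell-1}D_u^{v}(\lambda)\sigma_v$. Applying \Cref{lemma:chevalley} to each $\sigma_v$ gives $(\ell-1)!\cdot\sum_{v}\sum_{\alpha}D_u^{v}(\lambda)\,m(v\lessdot vs_{\alpha})\,\sigma_{vs_{\alpha}}$, where the inner sum is over $\alpha$ with $\ell(vs_{\alpha})=\ell(v)+1$. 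Collecting the coefficient of $\sigma_{w'}$ for a fixed $w'$ with $\ell(w')=\ell(u)+\ell$, this is $(\ell-1)!\cdot\sum_{v\lessdot w'}D_u^{v}(\lambda)\,m(v\lessdot w')$. Now I need to identify this with $\ell!\cdot D_u^{w'}(\lambda)$. By \Cref{def:PS}, $D_u^{w'}(\lambda)=\frac{1}{\ell!}\sum_{C}m_C$, where $C$ ranges over saturated chains from $u$ to $w'$. Every such chain $C=(u=u_0\lessdot\cdots\lessdot u_{\ell}=w')$ passes through a unique penultimate element $v=u_{\ell-1}$, and $m_C=m_{C'}\cdot m(v\lessdot w')$ where $C'$ is the chain from $u$ to $v$ obtained by deleting the last step. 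Grouping chains by $v$, $\sum_C m_C=\sum_{v\lessdot w'}\Bigl(\sum_{C' \text{ from } u \text{ to } v}m_{C'}\Bigr)m(v\lessdot w')=\sum_{v\lessdot w'}(\ell-1)!\,D_u^{v}(\lambda)\,m(v\lessdot w')$, using \Cref{def:PS} again for $D_u^v$. Hence $\ell!\cdot D_u^{w'}(\lambda)=(\ell-1)!\sum_{v\lessdot w'}D_u^{v}(\lambda)\,m(v\lessdot w')$, which matches the coefficient computed above, completing the induction.

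The computation is essentially routine bookkeeping; the one point requiring care is the combinatorial identity relating chains from $u$ to $w'$ to chains from $u$ to penultimate elements $v$, and making sure the factorial normalizations in \Cref{def:PS} are tracked consistently at each level (the chain of length $\ell$ contributes $\frac{1}{\ell!}$, while the subchain of length $\ell-1$ carries $\frac{1}{(\ell-1)!}$, so the ratio $\frac{\ell!}{(\ell-1)!}=\ell$ cancels correctly against the $\overline{\lambda}^{\ell}=\overline{\lambda}\cdot\overline{\lambda}^{\ell-1}$ split). I expect no genuine obstacle here, as this is the standard argument generalizing \cite[Proposition~4.2]{PostnikovStanley}, but I would be careful to note that $D_u^{w'}$ can legitimately be zero when $u\not\le w'$, so that the sums are over the correct Bruhat intervals and no spurious terms appear.
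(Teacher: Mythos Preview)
Your proposal is correct and follows essentially the same approach as the paper: induction on $\ell$ with \Cref{lemma:chevalley} supplying both the base case and the inductive step, together with the decomposition of a saturated chain from $u$ to $w'$ according to its penultimate element. The only cosmetic difference is that the paper first unwinds $D_u^{w'}$ into the raw sum over chains and inducts on the unnormalized identity, whereas you keep the $D_u^{w'}$ notation and track the factorial normalizations explicitly; the content is identical.
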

\begin{proof}
    By \Cref{def:PS}, we wish to show \[\overline{\lambda}^\ell \cdot \sigma_u=\sum_{w'}\sum_{u=u_0\lessdot\dots\lessdot u_\ell =w'} \prod_{i=1}^{\ell} m(u_{i-1}\lessdot u_i) \sigma_{w'}.\]
    We use induction on $\ell$, with the base case $\ell=1$ following from \Cref{lemma:chevalley}. For the inductive step, suppose the result holds for $\ell-1$. Then,
    \begin{align*}
        \overline{\lambda}^\ell \cdot \sigma_u&=\sum_{v}\sum_{u=u_0\lessdot\dots\lessdot u_{\ell-1} =v} \prod_{i=1}^{\ell-1} m(u_{i-1}\lessdot u_i) (\overline{\lambda}\cdot\sigma_{v})\\
        &=\sum_{v}\sum_{u=u_0\lessdot\dots\lessdot u_{\ell-1} =v} \prod_{i=1}^{\ell-1} m(u_{i-1}\lessdot u_i) \Big(\sum_{v\lessdot w'}m(v\lessdot w')\sigma_{w'}\Big) & \text{ \Cref{lemma:chevalley}}\\
        &=\sum_{w'}\sum_v\sum_{u=u_0\lessdot\dots\lessdot u_{\ell-1}=v\lessdot w'} \left(\prod_{i=1}^{\ell-1} m(u_{i-1}\lessdot u_i)\right)\cdot m(v\lessdot w')\sigma_{w'}\\
        &=\sum_{w'}\sum_{u=u_0\lessdot\dots\lessdot u_\ell =w'} \prod_{i=1}^{\ell} m(u_{i-1}\lessdot u_i) \sigma_{w'},
    \end{align*}
   as desired.
\end{proof}

The next lemma will help us compute the Poincar\'e pairing.

\begin{lemma}(\cite{PostnikovStanley})\label{lemma:poincaredual}
    Given a Weyl group $W$ and Schubert classes $\sigma_u$ and $\sigma_{w_0w}$, we have $\langle\sigma_u, \sigma_{w_0 w}\rangle=\delta_{u,w}$, where $\delta_{u,w}$ is the Kronecker delta.
\end{lemma}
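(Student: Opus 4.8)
The plan is to prove this identity geometrically: I will interpret the Poincaré pairing $\langle\sigma_u,\sigma_{w_0w}\rangle$ as the degree of a Richardson variety and then evaluate that degree through a dimension count together with a torus-fixed-point analysis. By the description of the pairing on $H^*(G/B)$ recalled above, $\langle\sigma_u,\sigma_{w_0w}\rangle$ is the coefficient of $\sigma_{w_0}$ in the cup product $\sigma_u\cdot\sigma_{w_0w}=\sigma_{w_0w}\cdot\sigma_u$, and by \Cref{prop:rich class} this product equals the Richardson class $[R_u^w]=[X^w\cap X_u]$ (with the convention that $R_u^w=\emptyset$, hence $[R_u^w]=0$, when $u\not\le w$). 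So it suffices to show that the coefficient of $\sigma_{w_0}$ in $[R_u^w]$ equals $\delta_{u,w}$.

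Write $N=\ell(w_0)=\dim_\C(G/B)$. First I would dispose of the case $u\neq w$. If $u\not\le w$, then $[R_u^w]=0$ and the coefficient is $0=\delta_{u,w}$. If $u\le w$, then $R_u^w$ has dimension $\ell(w)-\ell(u)$, so $[R_u^w]\in H^{2N-2(\ell(w)-\ell(u))}(G/B)$; since $\sigma_{w_0}\in H^{2N}(G/B)$, the coefficient of $\sigma_{w_0}$ in $[R_u^w]$ can be nonzero only when $\ell(u)=\ell(w)$, and the unique element of the interval $[u,w]$ having length $\ell(w)$ is $w$ itself, so $\ell(u)=\ell(w)$ together with $u\le w$ forces $u=w$. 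Hence whenever $u\neq w$ the coefficient is $0=\delta_{u,w}$.

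It remains to treat $u=w$, which is the step I expect to demand the most care. Here $R_w^w=X^w\cap X_w$ is a transverse intersection of dimension $\ell(w)-\ell(w)=0$, hence a finite set of reduced points. Since $T\subseteq B\cap B_-$, both $X^w$ and $X_w$ are $T$-stable, so $R_w^w$ is a $0$-dimensional $T$-stable closed subscheme of $G/B$, and every point of such a subscheme is a $T$-fixed point of $G/B$. The $T$-fixed points of $X^w=\overline{BwB/B}$ are exactly $\{vB:v\le w\}$ and those of $X_w=\overline{B_-wB/B}$ are exactly $\{vB:v\ge w\}$, so the only $T$-fixed point lying in both is $wB$. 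Thus $R_w^w=\{wB\}$ as a reduced scheme, and $[R_w^w]$ is the class of a single point. That class is $\sigma_{w_0}$: indeed $\sigma_{w_0}=[X^{w_0w_0}]=[X^e]=[\{eB\}]$, and any two points of the connected variety $G/B$ define the same class in $H^{2N}(G/B)\cong\ZZ$. Hence the coefficient of $\sigma_{w_0}$ in $[R_w^w]$ is $1=\delta_{w,w}$, and combining with the previous paragraph yields $\langle\sigma_u,\sigma_{w_0w}\rangle=\delta_{u,w}$.

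If one wishes to minimize outside inputs, the empty-intersection cases can be handled without \Cref{prop:rich class} — a transverse, hence proper, empty intersection of cycles has product zero — and the nonemptiness and dimension claims for $R_u^w$ can themselves be recovered from the same $T$-fixed-point picture via the Borel fixed point theorem (a nonempty closed $T$-stable subvariety of a complete variety has a $T$-fixed point, and the $T$-fixed points of $R_u^w$ are $\{vB:u\le v\le w\}$, so $R_u^w=\emptyset$ exactly when $u\not\le w$). The genuinely substantive ingredients are therefore just the transversality of $X^w\cap X_u$, the identification of the $T$-fixed loci of Schubert and opposite Schubert varieties with Bruhat intervals, and the fact that $H^{2N}(G/B)$ is freely generated by the point class $\sigma_{w_0}$.
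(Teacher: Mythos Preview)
Your argument is correct. Note, however, that the paper does not supply its own proof of this lemma: it is stated with a citation to \cite{PostnikovStanley} and used as a black box. Your route---identifying $\langle\sigma_u,\sigma_{w_0w}\rangle$ with the coefficient of $\sigma_{w_0}$ in $[R_u^w]$ via \Cref{prop:rich class}, killing the $u\neq w$ case by cohomological degree (together with emptiness of $X^w\cap X_u$ when $u\not\le w$), and handling $u=w$ by the $T$-fixed-point analysis showing $R_w^w=\{wB\}$ as a reduced scheme---is the standard geometric proof of Poincar\'e duality for Schubert classes. All of the ingredients you invoke (transversality of $X^w\cap X_u$, the Bruhat description of $T$-fixed points in $X^w$ and $X_w$, and $H^{2N}(G/B)=\ZZ\cdot\sigma_{w_0}$) are either already asserted in the paper or are classical, so there is no circularity. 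In short, rather than differing from the paper's proof, you are supplying one where the paper offered only a reference.
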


\begin{proof}[Proof of \Cref{prop:degree}]
    Let $\ell\coloneq \ell(w)-\ell(u)$. By \Cref{def:degreepoly} and \Cref{prop:rich class}, we have 
    \[\deg_{\lambda}(R_u^w)=\langle [R_u^w], \overline{\lambda}^{\ell}\rangle = \langle [X^w]\cdot[X_u],\overline{\lambda}^{\ell}\rangle=\langle \sigma_{w_0 w}\cdot \sigma_u, \overline{\lambda}^{\ell}\rangle,\]
    which corresponds to the coefficient of $\sigma_{w_0}$ in $\sigma_{w_0 w}\cdot \sigma_u \cdot \overline{\lambda}^{\ell}$.
    By \Cref{lemma:chainsum}, we have $\sigma_u \cdot \overline{\lambda}^{\ell}=\ell!\cdot\sum_{w'}D_u^{w'}(\lambda) \sigma_{w'}$, where the sum is over all $w'\in W$ such that $\ell(w')=\ell(u)+\ell$. Therefore, \[\deg_{\lambda}(R_u^w)=\ell!\cdot\sum_{w'} D_u^{w'}(\lambda) \langle\sigma_{w_0w}, \sigma_{w'}\rangle=\ell!\cdot D_u^w(\lambda),\]
    where the last equality follows from \Cref{lemma:poincaredual}.
\end{proof}

\begin{remark}
    As a corollary of \Cref{prop:degree} and  \cite[Theorem~2.5]{polytopeXL} (see also \cite[Remark~3.9(iii)]{Toricsperical}), Postnikov--Stanley polynomials $D_u^w(\lambda)$ represent the volume of the \emph{Newton--Okounkov body} $\Delta_{w_0}(R_u^w, \lambda)$ with respect to the Lebesgue measure of dimension $\ell(w)-\ell(u)$. For the definition of $\Delta_{w_0}(R_u^w, \lambda)$, see, for example, \cite[Section~2.5]{polytopeXL}.
\end{remark}

\section{Postnikov--Stanley polynomials are Lorentzian}\label{section:Lorentzian}
In this section, we prove \Cref{theorem:PS Lorentzian}. We require the following lemma.

\begin{lemma}(\cite[Proposition~2.2.10]{Hague2007CohomologyOF})\label{lemma:nef line bundle}
    Let $G/B$ be a flag variety, $\lambda\in \Lambda$ a weight, and $\mathcal{L}_\lambda$ the corresponding line bundle. Then $\mathcal{L}_\lambda$ is a nef line bundle if and only if $\lambda\in \Lambda^+$.
\end{lemma}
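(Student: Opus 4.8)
The idea is to recognize the coordinates $x_i = (\lambda, \alpha_i^{\vee})$ of $\lambda$ as the intersection numbers of $\overline{\lambda} = c_1(\mathcal{L}_\lambda)$ with the \emph{Schubert curves}, and then invoke the curve-wise characterization of nef-ness. For each simple root $\alpha_i$, let $C_i \coloneqq X^{s_{\alpha_i}} = \overline{B s_{\alpha_i} B/B} \subseteq G/B$; since $\ell(s_{\alpha_i}) = 1$, this is an irreducible curve (in fact $C_i \cong \mathbb{P}^1$), and, from the definition $\sigma_w = [X^{w_0 w}]$, its class in $H^{*}(G/B)$ is $[C_i] = \sigma_{w_0 s_{\alpha_i}}$.

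The key computation is $\langle \overline{\lambda}, [C_i] \rangle = (\lambda, \alpha_i^{\vee})$ for each $i$. Applying the Chevalley formula (\Cref{lemma:chevalley}) to $\sigma_{\id} = [G/B] = 1$, and using that the simple reflections are exactly the reflections of length one, gives the hyperplane-class expansion $\overline{\lambda} = \sum_{j=1}^{r} (\lambda, \alpha_j^{\vee})\, \sigma_{s_{\alpha_j}}$. Pairing this with $[C_i] = \sigma_{w_0 s_{\alpha_i}}$ and applying \Cref{lemma:poincaredual}, under which $\langle \sigma_{s_{\alpha_j}}, \sigma_{w_0 s_{\alpha_i}} \rangle = \delta_{i,j}$, yields $\langle \overline{\lambda}, [C_i] \rangle = (\lambda, \alpha_i^{\vee})$. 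As noted in the discussion following \Cref{def:degreepoly}, this Poincaré pairing equals the curve-theoretic degree $\int_{C_i} c_1(\mathcal{L}_\lambda)$, so $\int_{C_i} c_1(\mathcal{L}_\lambda) = x_i$ for every $i$.

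The backward implication now follows directly from the definition of nef recalled in the text: if $\mathcal{L}_\lambda$ is nef, then applying the defining inequality to each irreducible curve $C_i$ gives $x_i = \int_{C_i} c_1(\mathcal{L}_\lambda) \ge 0$, whence $\lambda \in \Lambda^+$. For the forward implication, I would invoke the Borel--Weil setup already used in the introduction: for $\lambda \in \Lambda^+$ the morphism $e \colon G/B \to \mathbb{P}(V_\lambda)$ is defined and $\mathcal{L}_\lambda = e^{*} \mathcal{O}_{\mathbb{P}(V_\lambda)}(1)$; since $\mathcal{O}(1)$ is very ample, hence nef, and the pullback of a nef line bundle along a morphism is nef, $\mathcal{L}_\lambda$ is nef. (Equivalently: a dominant $\lambda$ makes $\mathcal{L}_\lambda$ globally generated, and a globally generated line bundle is nef, since the induced morphism to projective space pulls $\mathcal{O}(1)$ back to it.)

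I expect the computation in the second paragraph to be the only delicate point: one must correctly identify the Schubert curve $C_i$ with the Schubert class $\sigma_{w_0 s_{\alpha_i}}$, keep the Chevalley/Monk bookkeeping for the hyperplane class $\overline{\lambda}$ straight, and confirm that the Poincaré pairing as set up in \Cref{section:degree} really does compute the degree $\int_{C_i} c_1(\mathcal{L}_\lambda)$ appearing in the definition of nef. If one prefers to avoid cohomology here, an alternative is to compute $\deg \bigl( \mathcal{L}_\lambda|_{C_i} \bigr) = (\lambda, \alpha_i^{\vee})$ directly via $\mathrm{SL}_2$-representation theory, identifying $C_i$ with $P_i/B \cong \mathbb{P}^1$ for $P_i$ the $i$-th minimal parabolic. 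The remaining facts --- that nef-ness can be tested on irreducible curves, that pullbacks of nef bundles are nef, and that $\mathcal{L}_\lambda$ is globally generated exactly when $\lambda$ is dominant (Borel--Weil) --- are standard.
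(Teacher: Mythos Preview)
The paper does not actually prove this lemma: it is stated with a citation to \cite[Proposition~2.2.10]{Hague2007CohomologyOF} and used as a black box in the proof of \Cref{theorem:PS Lorentzian}. So there is no ``paper's own proof'' to compare against.

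That said, your argument is correct and is the standard one. The computation $\int_{C_i} c_1(\mathcal{L}_\lambda) = (\lambda,\alpha_i^{\vee})$ via the Chevalley formula and Poincar\'e duality is exactly right, and testing nef-ness on the Schubert curves $C_i$ immediately gives the implication nef $\Rightarrow$ dominant. For dominant $\Rightarrow$ nef, your Borel--Weil argument (globally generated, hence pullback of $\mathcal{O}(1)$, hence nef) is fine; an equivalent way to phrase it, staying within the Schubert-curve framework, is that the classes $[C_i]$ generate the Mori cone $\overline{NE}(G/B)$, so nonnegativity on each $C_i$ already suffices for nef-ness. One small quibble: you have the labels ``forward'' and ``backward'' reversed relative to the stated biconditional, but the mathematics is unaffected.
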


\begin{proof}[Proof of \Cref{theorem:PS Lorentzian}]
    By \Cref{lemma:nef line bundle}, the line bundle $\mathcal{L}_{\omega_i}$ corresponding to any fundamental weight $\omega_i\in \Lambda^+$ is a nef line bundle. Then the Cartier divisor $D_i$ corresponding to the first Chern class $\overline{\omega_i}\coloneqq c_1(\mathcal{L}_{\omega_i})$ is a nef Cartier divisor for each $i$. Let $D$ denote the Cartier divisor corresponding to $c_1(\mathcal{L}_{\lambda})$.
    Since $\lambda=\sum_{i=1}^r x_i \omega_i$, we have $\overline{\lambda}=\sum_{i=1}^r x_i \overline{\omega_i}$ and $D=\sum_{i=1}^r x_iD_i$. 

    By \Cref{prop:degree} and \Cref{def:degreepoly}, we have\[(\ell(w)-\ell(u))!\cdot D_u^w(\lambda)= \deg_{\lambda}(R_u^w)=\langle(c_1(\mathcal{L}_\lambda))^{\ell(w)-\ell(u)},[R_u^w]\rangle=\int_{G/B}(c_1(\mathcal{L}_\lambda))^{\ell(w)-\ell(u)}\cdot[R_u^w].\]
    Moreover, by \cite[Proposition~1.31]{3264}, we have
    \[\int_{G/B}(c_1(\mathcal{L}_\lambda))^{\ell(w)-\ell(u)}\cdot[R_u^w]=(D^{\ell(w)-\ell(u)}\cdot R_u^w) =\int_{G/B}(c_1(\mathcal{L}_\lambda|_{R_u^w}))^{\ell(w)-\ell(u)}.\]

    Let $D_i'\coloneqq D_i\cap R_u^w$ be the Cartier divisor corresponding to the line bundle $\mathcal{L}_{\omega_i}|_{R_u^w}$, and let $D'\coloneqq D\cap R_u^w$ be the Cartier divisor corresponding to the line bundle $\mathcal{L}_{\lambda}|_{R_u^w}$. By \cite[Example~1.4.4]{PositivityAG1}, since each $\mathcal{L}_{\omega_i}$ is a nef line bundle, each $\mathcal{L}_{\omega_i}|_{R_u^w}$ is a nef line bundle, and thus each $D_i'$ is a nef Cartier divisor. Moreover, we have
    \[\int_{G/B}(c_1(\mathcal{L}_\lambda|_{R_u^w}))^{\ell(w)-\ell(u)}=
 (D')^{\ell(w)-\ell(u)}= \left(\sum_{i=1}^r x_iD_i'\right)^{\ell(w)-\ell(u)}=
 \vol_{R_u^w,\mathbf{D'}}(x_1,\dots,x_r),\]
    where $\mathbf{D'}\coloneqq(D_1',\dots,D_r')$, and the last equality is by \Cref{thm:volume Lorentzian}. Since
    \[D_u^w(\lambda)=\frac{1}{(\ell(w)-\ell(u))!}\cdot \vol_{R_u^w,\mathbf{D'}}(x_1,\dots,x_r)\]
    and a nonzero multiple of a volume polynomial is Lorentzian, we obtain the desired result.
\end{proof}

\begin{proof}[Proof of \Cref{cor:PS M-conv}]
    This follows from \Cref{theorem:PS Lorentzian} and \Cref{def:Lorentzian} (1).
\end{proof}

\bibliographystyle{plain} 
\bibliography{references}

\begin{thebibliography}{10}

\bibitem{Toricsperical}
Valery Alexeev and Michel Brion.
\newblock Toric degenerations of spherical varieties.
\newblock {\em Selecta Math. (N.S.)}, 10(4):453--478, 2004.

\bibitem{whole}
Serena An, Katherine Tung, and Yuchong Zhang.
\newblock Newton polytopes of dual {S}chubert polynomials, 2024.

\bibitem{logconcavepoly}
Nima Anari, Shayan~Oveis Gharan, and Cynthia Vinzant.
\newblock Log-concave polynomials, {I}: entropy and a deterministic approximation algorithm for counting bases of matroids.
\newblock {\em Duke Math. J.}, 170(16):3459--3504, 2021.

\bibitem{torusquotient}
Sarjick Bakshi, S.~Senthamarai Kannan, and Subrahmanyam~K. Venkata.
\newblock Torus quotients of {R}ichardson varieties in the {G}rassmannian.
\newblock {\em Comm. Algebra}, 48(2):891--914, 2020.

\bibitem{singgenRich}
Sara Billey and Izzet Coskun.
\newblock Singularities of generalized {R}ichardson varieties.
\newblock {\em Comm. Algebra}, 40(4):1466--1495, 2012.

\bibitem{BjornerBrenti}
Anders Bjorner and Francesco Brenti.
\newblock {\em Combinatorics of Coxeter Groups}.
\newblock Springer Berlin, Heidelberg, 01 2005.

\bibitem{linearalggrp}
Armand Borel.
\newblock {\em Linear algebraic groups}, volume 126 of {\em Graduate Texts in Mathematics}.
\newblock Springer-Verlag, New York, second edition, 1991.

\bibitem{Lorentzian}
Petter Br\"and\'en and June Huh.
\newblock Lorentzian polynomials.
\newblock {\em Ann. of Math. (2)}, 192(3):821--891, 2020.

\bibitem{standardmonomial}
M.~Brion and V.~Lakshmibai.
\newblock A geometric approach to standard monomial theory.
\newblock {\em Represent. Theory}, 7:651--680, 2003.

\bibitem{Bry82}
Thomas Brylawski.
\newblock The {T}utte polynomial {I}. general theory.
\newblock {\em Matroid theory and its applications}, pages 125--275, 1982.

\bibitem{toricrich}
Mahir~Bilen Can and Pinakinath Saha.
\newblock Toric {R}ichardson varieties, 2023.

\bibitem{CCMM}
Castillo, Cid Ruiz, Mohammadi, and Montano.
\newblock Double {S}chubert polynomials do have saturated {N}ewton polytopes.
\newblock {\em Forum Math Sigma}, 11:e100, 2023.

\bibitem{relRich}
Melody Chan and Nathan Pflueger.
\newblock Relative {R}ichardson varieties.
\newblock {\em Math. Proc. Cambridge Philos. Soc.}, 175(1):161--186, 2023.

\bibitem{Chevalley}
C.~Chevalley.
\newblock Sur les d\'ecompositions cellulaires des espaces {$G/B$}.
\newblock In {\em Algebraic groups and their generalizations: classical methods ({U}niversity {P}ark, {PA}, 1991)}, volume 56, Part 1 of {\em Proc. Sympos. Pure Math.}, pages 1--23. Amer. Math. Soc., Providence, RI, 1994.
\newblock With a foreword by Armand Borel.

\bibitem{Deodhar}
Vinay~V. Deodhar.
\newblock On some geometric aspects of {B}ruhat orderings. {I}. {A} finer decomposition of {B}ruhat cells.
\newblock {\em Invent. Math.}, 79(3):499--511, 1985.

\bibitem{3264}
David Eisenbud and Joe Harris.
\newblock {\em 3264 and All That: A Second Course in Algebraic Geometry}.
\newblock Cambridge University Press, 2016.

\bibitem{EurHuh}
Christopher Eur and June Huh.
\newblock Logarithmic concavity for morphisms of matroids.
\newblock {\em Adv. Math.}, 367:107094, 19, 2020.

\bibitem{FMD}
Fink, M\'ez\'saros, and St. Dizier.
\newblock {S}chubert polynomials as integer point transforms of generalized permutahedra.
\newblock {\em Advances in Math}, 332:465--475, 2018.

\bibitem{Schubdegloci}
William Fulton and Piotr Pragacz.
\newblock {\em {S}chubert varieties and degeneracy loci}, volume 1689 of {\em Lecture Notes in Mathematics}.
\newblock Springer-Verlag, Berlin, 1998.
\newblock Appendix J by the authors in collaboration with I. Ciocan-Fontanine.

\bibitem{Hague2007CohomologyOF}
Charles Hague.
\newblock {\em Cohomology of flag varieties and the {BK}-filtration}.
\newblock ProQuest LLC, Ann Arbor, MI, 2007.
\newblock Thesis (Ph.D.)--The University of North Carolina at Chapel Hill.

\bibitem{Her72}
A.~P. Heron.
\newblock Matroid polynomials.
\newblock In {\em Combinatorics ({P}roc. {C}onf. {C}ombinatorial {M}ath., {M}ath. {I}nst., {O}xford, 1972)}, pages 164--202. Inst. Math. Appl., Southend-on-Sea, 1972.

\bibitem{Hodge}
W.~V.~D. Hodge.
\newblock The intersection formulae for a {G}rassmannian variety.
\newblock {\em J. London Math. Soc.}, 17:48--64, 1942.

\bibitem{HuhMeszaroLorSchur}
June Huh, Jacob~P. Matherne, Karola M\'esz\'aros, and Avery St.~Dizier.
\newblock Logarithmic concavity of {S}chur and related polynomials.
\newblock {\em Trans. Amer. Math. Soc.}, 375(6):4411--4427, 2022.

\bibitem{polytopeXL}
Kiumars Kaveh.
\newblock Note on cohomology rings of spherical varieties and volume polynomial.
\newblock {\em J. Lie Theory}, 21(2):263--283, 2011.

\bibitem{KLPonincare}
David Kazhdan and George Lusztig.
\newblock {S}chubert varieties and {P}oincar\'e{} duality.
\newblock In {\em Geometry of the {L}aplace operator ({P}roc. {S}ympos. {P}ure {M}ath., {U}niv. {H}awaii, {H}onolulu, {H}awaii, 1979)}, volume XXXVI of {\em Proc. Sympos. Pure Math.}, pages 185--203. Amer. Math. Soc., Providence, RI, 1980.

\bibitem{projRich}
Allen Knutson, Thomas Lam, and David~E. Speyer.
\newblock Projections of {R}ichardson varieties.
\newblock {\em J. Reine Angew. Math.}, 687:133--157, 2014.

\bibitem{singRich}
Allen Knutson, Alexander Woo, and Alexander Yong.
\newblock Singularities of {R}ichardson varieties.
\newblock {\em Math. Res. Lett.}, 20(2):391--400, 2013.

\bibitem{KawamataRich}
Shrawan Kumar and Karl Schwede.
\newblock Richardson varieties have {K}awamata log terminal singularities.
\newblock {\em International Mathematics Research Notices}, 2014(3):842--864, 11 2012.

\bibitem{RichardsonK}
V.~Lakshmibai and P.~Littelmann.
\newblock Richardson varieties and equivariant {$K$}-theory.
\newblock {\em Journal of Algebra}, 260(1):230--260, 2003.
\newblock Special Issue Celebrating the 80th Birthday of Robert Steinberg.

\bibitem{PositivityAG1}
Robert Lazarsfeld.
\newblock {\em Positivity in algebraic geometry. {I}}, volume~48 of {\em Ergebnisse der Mathematik und ihrer Grenzgebiete. 3. Folge. A Series of Modern Surveys in Mathematics [Results in Mathematics and Related Areas. 3rd Series. A Series of Modern Surveys in Mathematics]}.
\newblock Springer-Verlag, Berlin, 2004.
\newblock Classical setting: line bundles and linear series.

\bibitem{toricbruhat}
Eunjeong Lee, Mikiya Masuda, and Seonjeong Park.
\newblock Toric {B}ruhat interval polytopes.
\newblock {\em J. Combin. Theory Ser. A}, 179:Paper No. 105387, 41, 2021.

\bibitem{richardsoncatalan}
Eunjeong Lee, Mikiya Masuda, and Seonjeong Park.
\newblock Toric {R}ichardson varieties of {C}atalan type and {W}edderburn-{E}therington numbers.
\newblock {\em European J. Combin.}, 108:Paper No. 103617, 25, 2023.

\bibitem{nonnegativeflag}
R.~J. Marsh and K.~Rietsch.
\newblock Parametrizations of flag varieties.
\newblock {\em Represent. Theory}, 8:212--242, 2004.

\bibitem{Mas72}
J.~H. Mason.
\newblock Matroids: unimodal conjectures and {M}otzkin's theorem.
\newblock In {\em Combinatorics ({P}roc. {C}onf. {C}ombinatorial {M}ath., {M}ath. {I}nst., {O}xford, 1972)}, pages 207--220. Inst. Math. Appl., Southend-on-Sea, 1972.

\bibitem{MesSet}
Karola M\'esz\'aros and Linus Setiabrata.
\newblock Lorentzian polynomials from polytope projections.
\newblock {\em Algebr. Comb.}, 4(4):723--739, 2021.

\bibitem{NewtonPolyinAC}
Cara Monical, Neriman Tokcan, and Alexander Yong.
\newblock Newton polytopes in algebraic combinatorics.
\newblock {\em Selecta Mathematica}, 25, 2019.

\bibitem{DicreteConvexAnaly}
Kazuo Murota.
\newblock {\em Discrete Convex Analysis}.
\newblock Society for Industrial and Applied Mathematics, 2003.

\bibitem{tewari}
Philippe Nadeau, Hunter Spink, and Vasu Tewari.
\newblock The geometry of quasisymmetric coinvariants, 2024.

\bibitem{PostnikovStanley}
Alexander Postnikov and Richard~P. Stanley.
\newblock Chains in the {B}ruhat order.
\newblock {\em J. Algebraic Combin.}, 29(2):133--174, 2009.

\bibitem{Richardson}
R.~W. Richardson.
\newblock Intersections of double cosets in algebraic groups.
\newblock {\em Indag. Math. (N.S.)}, 3(1):69--77, 1992.

\bibitem{quantumRich}
L.~Rigal and P.~Zadunaisky.
\newblock Quantum analogues of {R}ichardson varieties in the {G}rassmannian and their toric degeneration.
\newblock {\em J. Algebra}, 372:293--317, 2012.

\bibitem{Rot71}
Gian-Carlo Rota.
\newblock Combinatorial theory, old and new.
\newblock {\em Actes du Congrès International des Mathématiciens (Nice, 1970)}, 3:229--233, 1971.

\bibitem{SpeyerRich}
David~E Speyer.
\newblock Richardson varieties, projected {R}ichardson varieties, and positroid varieties, 2023.

\bibitem{Tignol1995}
J.-P. Tignol and A.S. Merkurjev.
\newblock The multipliers of similitudes and the {B}rauer group of homogeneous varieties.
\newblock {\em Journal für die reine und angewandte Mathematik}, 461:13--48, 1995.

\bibitem{bruhatintervalpolytopes}
E.~Tsukerman and L.~Williams.
\newblock Bruhat interval polytopes.
\newblock {\em Adv. Math.}, 285:766--810, 2015.

\bibitem{Welsh}
D.J.A. Welsh.
\newblock {\em Matroid theory}.
\newblock Harcourt Brace Jovanovich, 1976.

\end{thebibliography}

\end{document}